\documentclass[12pt,reqno]{amsart}

\usepackage[T1]{fontenc}
\usepackage[utf8]{inputenc}
\usepackage{lmodern}
\usepackage{microtype}
\usepackage{mathtools}
\usepackage{amssymb}
\usepackage{enumitem}
\usepackage{booktabs}
\usepackage{xcolor}
\usepackage[colorlinks=true,linkcolor=blue,citecolor=blue,
  urlcolor=blue!55!black]{hyperref}

\usepackage[
  a4paper,
  left=3cm,
  right=3cm,
  top=3cm,
  bottom=3cm
]{geometry}

\makeatletter
\@namedef{subjclassname@2020}{\textup{2020} Mathematics Subject Classification}
\makeatother

\setlist[enumerate]{label=(\roman*),leftmargin=2.2em}
\newtheorem{theorem}{Theorem}[section]

\newtheorem{lemma}[theorem]{Lemma}

\theoremstyle{definition}

\newtheorem*{problem}{Problem}
\theoremstyle{remark}
\newtheorem{remark}[theorem]{Remark}

\newcommand{\R}{\mathbb R}
\newcommand{\eps}{\varepsilon}
\newcommand{\ip}[2]{\langle #1,#2\rangle}
\newcommand{\Sc}{\mathcal R}

\title[Positive biorthogonal curvature]{Positive biorthogonal curvature on $S^2\times T^2$}

\author{Zhiqi Chen}
\address[Zhiqi Chen]{School of Mathematics and Statistics, Guangdong University of Technology, Guangzhou 510520, P.R. China}\email{chenzhiqi@nankai.edu.cn}

\author{Hui Zhang}
\address[Hui Zhang]{School of Mathematics, Southeast University, Nanjing 210096, P.~R.~China}
\email{huizhang@mail.nankai.edu.cn}

\date{}

\subjclass[2020]{Primary 53C20; Secondary 53C21, 53C23}
\keywords{Biorthogonal curvature, connection metric, conformal deformation}

\begin{document}
\raggedbottom

\begin{abstract}
We construct explicit Riemannian metrics with positive biorthogonal
curvature on $S^2\times T^2$, answering a question of Bettiol. In
particular, positive biorthogonal curvature on a closed four-manifold
does not force its fundamental group to be virtually free.
\end{abstract}
\maketitle

\section{Introduction}\label{sec:introduction}

Let $(M,g)$ be a four-dimensional Riemannian manifold. The biorthogonal
curvature of a two-plane $\sigma\subset T_pM$ is
\begin{align}\label{eq:definition}
 K_g^\perp(\sigma)
 =\frac12\bigl(K_g(\sigma)+K_g(\sigma^{\perp_g})\bigr),
\end{align}
where $K_g$ denotes sectional curvature and $\sigma^{\perp_g}$ is the
orthogonal complement. We write
\begin{align*}
 k_g(p)=\min_{\sigma\subset T_pM}K_g^\perp(\sigma).
\end{align*}
The Riemannian  $g$ is said of positive biorthogonal curvature if $k_g>0$. Positive
sectional curvature implies positive biorthogonal curvature, which in
turn implies positive scalar curvature. 

Sums of sectional curvatures on orthogonal planes occur in Seaman's work
on orthogonal pinching; see \cite{Seaman91}. In dimension four, biorthogonal
curvature has been studied in connection with pinching and rigidity
problems (\cite{CR14}), as well as the construction of positive-curvature
metrics. In \cite{Bettiol14},  Bettiol constructed metrics with positive
biorthogonal curvature on $S^2\times S^2$ by a Cheeger deformation followed
by a conformal change. He subsequently proved that positive biorthogonal
curvature is preserved by connected sums and determined the corresponding
homeomorphism types of closed simply connected four-manifolds, allowing
the smooth structure to vary; see \cite{Bettiol17}. The connected-sum argument
uses Hoelzel's surgery criterion~\cite{Hoelzel}.

For manifolds with nontrivial fundamental group, the range of possible
topologies is less understood. In the concluding discussion of
\cite[Section~3]{Bettiol17}, Bettiol asked whether every finitely presented
group can occur as the fundamental group of a closed four-manifold with
positive biorthogonal curvature and singled out $S^2\times T^2$ as a test
case. The following question was also recorded by Morgan and
Pansu~\cite{MP}.

\begin{problem}[Bettiol]
Does $S^2\times T^2$ admit a Riemannian metric with positive biorthogonal
curvature?
\end{problem}

In this paper, we answer this question affirmatively. More precisely, the product of the
unit round sphere with any flat torus is the smooth limit of an explicit
family of metrics with positive biorthogonal curvature. The metrics
retain the isometric action of $T^2$ by translations.

Now, let $S^2\subset\R^3$ carry its unit round metric $g_S$. For the standard
basis $E_1,E_2,E_3$ of $\R^3$, define
\begin{align*}
 X_i(p)=E_i\times p\quad (i=1,2,3),\qquad z(p)=\ip{p}{E_3}.
\end{align*}
Fix a rank-two lattice $\Lambda\subset\R^2$, and equip
$T^2=\R^2/\Lambda$ with the induced flat metric $g_T$. The constant
orthonormal fields $\partial_{t_1},\partial_{t_2}$ and their dual
one-forms $dt_1,dt_2$ descend to $T^2$. For $\eps>0$, set
\begin{align}\label{eq:metric}
 g_\eps\bigl((V,a,b),(V,a,b)\bigr)
 =\left|V-\eps aX_1-\eps bX_2\right|_{g_S}^2+a^2+b^2,
\end{align}
where $V\in T_pS^2$ and $a\partial_{t_1}+b\partial_{t_2}\in T_tT^2$.
The fiberwise linear map
\begin{align*}
 (V,a,b)\longmapsto(V-\eps aX_1-\eps bX_2,a,b)
\end{align*}
is invertible, so $g_\eps$ is a smooth positive definite metric on the
product. 

The first theorem gives the curvature estimate underlying the
construction.

\begin{theorem}\label{thm:seed}
For every rank-two lattice $\Lambda\subset\R^2$ and every
$0<\eps\le1/4$, the metric $g_\eps$ in~\eqref{eq:metric} satisfies
\begin{align}\label{eq:seed}
 k_{g_\eps}(p,t)
 \ge\frac{\eps^4}{8}\bigl(1-z(p)^2\bigr)-\frac{\eps^6}{2},
\end{align}
{for all} $(p,t)\in S^2\times(\R^2/\Lambda).$
\end{theorem}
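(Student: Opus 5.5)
The plan is to view $g_\eps$ as a Riemannian submersion over the flat torus, compute its curvature operator exactly in an adapted frame, and then reduce $k_{g_\eps}$ to the smallest eigenvalues of the two Weyl blocks $W^\pm$.

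\emph{Step 1: submersion structure.} The projection $\pi\colon(S^2\times T^2,g_\eps)\to(T^2,g_T)$ is a Riemannian submersion. The vertical spaces are $T_pS^2$ with the round metric. The horizontal lift of $a\partial_{t_1}+b\partial_{t_2}$ is $(\eps aX_1+\eps bX_2,a,b)$, which has length $\sqrt{a^2+b^2}$. Since $X_1,X_2$ are Killing fields of $g_S$, the horizontal flows act by isometries between fibres, so the fibres are totally geodesic and the O'Neill tensor $T$ vanishes.

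The integrability tensor is given by the bracket of the horizontal lifts $h_1,h_2$ of $\partial_{t_1},\partial_{t_2}$. This bracket is $\eps^2[X_1,X_2]=\pm\eps^2X_3$, so $A_{h_1}h_2=\pm\frac{\eps^2}{2}X_3$, and $|X_3|^2=1-z^2$. This is the source of the factor $\eps^4(1-z^2)$.

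\emph{Step 2: the curvature operator.} I will write the full curvature operator in the frame $h_1,h_2,e_1,e_2$, with $e_1,e_2$ orthonormal on $S^2$. Using the O'Neill formulas with $T=0$ and a flat base, its components are:
\begin{itemize}
\item $R(h_1,h_2,h_2,h_1)=-3|A_{h_1}h_2|^2=-\tfrac34\eps^4(1-z^2)$;
\item $R(e_1,e_2,e_2,e_1)=1$;
\item the mixed terms $R(h,v,v,h)=|A^*_hv|^2$, which are quadratic in $\eps^2 X_3$;
\item off-diagonal terms involving $(\nabla_vA)$, coming from the covariant derivative of $X_3$ on $S^2$, which is of size $\eps^2$.
\end{itemize}
All entries should come out as explicit polynomials in $\eps$ and in the components of $X_1,X_2,X_3$ with respect to $e_1,e_2$ at $p$. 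Choosing $e_1,e_2$ adapted to $X_3$, for instance $e_1\parallel X_3$ where $X_3\neq0$, should make the computation manageable. Alternatively, one can compute directly with the Koszul formula in the explicit frame $\{(X_i\text{-combinations},0,0),\,h_1,h_2\}$, using $[X_i,X_j]=\epsilon_{ijk}X_k$.

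\emph{Step 3: reduction to Weyl eigenvalues.} In dimension four, let $\omega$ be the unit bivector of $\sigma$. Then $*\omega$ represents $\sigma^\perp$, and $\omega=\omega^++\omega^-$ with $|\omega^\pm|^2=\tfrac12$. Writing the curvature operator as
\[
\begin{pmatrix} W^++\tfrac{\mathrm{scal}}{12} & B\\ B^{t} & W^-+\tfrac{\mathrm{scal}}{12}\end{pmatrix},
\]
the $B$-cross terms cancel in $R(\omega,\omega)+R(*\omega,*\omega)$. This gives
\[
K^\perp(\sigma)=\tfrac{\mathrm{scal}}{12}+\tfrac12\bigl(W^+(\alpha,\alpha)+W^-(\beta,\beta)\bigr)
\]
for unit self-dual $\alpha$ and unit anti-self-dual $\beta$, and every such pair arises from a decomposable $\omega$. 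Hence
\[
k_{g_\eps}=\tfrac{\mathrm{scal}}{12}+\tfrac12\bigl(\lambda_{\min}(W^+)+\lambda_{\min}(W^-)\bigr).
\]

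\emph{Step 4: the estimate (main obstacle).} It remains to bound this expression from below. First compute $\mathrm{scal}$, which should equal $2-\tfrac12\eps^4(1-z^2)$ up to sign conventions. Then compute the $3\times3$ traceless matrices $W^\pm$ in the bases $h_1\wedge h_2\pm e_1\wedge e_2$, and so on.

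At $\eps=0$ one has $k=0$ exactly, with $W^\pm$ having eigenvalues $(1/3,-1/6,-1/6)$, which cancel $\mathrm{scal}/12=1/6$. So the positive answer must come from a perturbation analysis at order $\eps^2$ and $\eps^4$ on the degenerate eigenspaces. The $\eps^2$ off-diagonal terms from $\nabla A$ must contribute nonnegatively at order $\eps^4$, which happens through second-order perturbation against the gap of size $1/2$. One must also check that the net $\eps^4$ coefficient is at least $\tfrac18(1-z^2)$.

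I expect this eigenvalue bound to be the hardest step. I would first restrict $W^\pm$ to the $2$-dimensional near-degenerate block and use the Schur complement with the $\eps\le1/4$ gap bound. Every remaining cubic and higher contribution would then be absorbed into the crude error term $-\eps^6/2$ by Gershgorin-type estimates using $|X_i|\le1$.
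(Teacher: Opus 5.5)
\textbf{Verdict: genuine gap in Step 4.} Your Steps 1--3 are correct and are the paper's own set-up. The projection $\pi$ is a Riemannian submersion onto the flat torus with totally geodesic round fibres, $F=[H_1,H_2]=-\eps^2X_3$, and $k_{g_\eps}=\tfrac12(\lambda_{\min}(A_+)+\lambda_{\min}(A_-))$. The last formula is your Step 3, since $A_\pm=W^\pm+\tfrac{s}{12}I$.

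The gap is Step 4, which is the whole content of the theorem. It is not carried out, and the mechanism you predict for it is wrong.

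\emph{Where the $O(\eps^2)$ terms go.} They come from $q=\ip{\nabla^S_{e_1}F}{e_2}=-\eps^2z$, namely $\Sc_{12,34}=q$, $\Sc_{13,24}=q/2$, $\Sc_{14,23}=-q/2$. These are off-diagonal in the basis $e_a\wedge e_b$. In the Hodge bases $\omega_i^\pm$, however, they are diagonal: they add $\pm q$ to the top entry of $A_\pm$ and $\mp q/2$ to both near-zero entries. Equivalently, they form a traceless piece of $W^\pm$ with opposite signs on $\Lambda^+$ and $\Lambda^-$.

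\emph{Consequence.} These terms shift the small eigenvalues at \emph{first} order, by $\pm\eps^2z/2$. That is far larger than the target $\eps^4(1-z^2)/8$. Indeed, one of $\lambda_{\min}(A_+)$, $\lambda_{\min}(A_-)$ is negative wherever $|z|>\eps^2/4$. Positivity appears only after averaging the two blocks, where these shifts cancel exactly, and your plan never identifies this cancellation.

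\emph{Your mechanism has the wrong sign.} A second-order coupling of the lowest eigenvalue to a higher one always lowers it. So genuine $O(\eps^2)$ couplings across a gap of size about $1/2$ would cost an amount of order $\eps^4$ with the wrong sign. That is enough to destroy the margin $\eps^4(1-z^2)/8$.

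\emph{What is needed.} You need the explicit blocks: in the bases $\omega_i^\pm$,
\[
A_\pm=\begin{pmatrix}a_\pm&b_\pm^T\\ b_\pm&c_\pm I_2\end{pmatrix},\qquad a_\pm=\tfrac12-\tfrac38r^2\pm q,\quad c_\pm=\tfrac18r^2\mp\tfrac12q,\quad r^2=|F|^2=\eps^4(1-z^2).
\]
Two structural facts must be checked.
\begin{enumerate}
\item The near-degenerate $2\times2$ block is exactly scalar: the mixed curvatures $\tfrac14f_a^2$ combine to $\tfrac18r^2$, and the cross terms $\pm\tfrac14f_1f_2$ cancel.
\item The only coupling $b_\pm$ between $\omega_1^\pm$ and that block is $O(\eps^3)$. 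It is built from the entries $\Sc_{34,a3}=-\tfrac12\ip{D_1F}{e_a}$ and $\Sc_{34,a4}=-\tfrac12\ip{D_2F}{e_a}$, where $D_iF=[H_i,F]$.
\end{enumerate}
These horizontal-derivative terms of the O'Neill tensor are missing from your Step 2 list, yet they are exactly what produces the $-\eps^6/2$ term.

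\emph{Finishing the estimate.} For $\eps\le1/4$ one has $a_\pm-c_\pm>1/4$, and then $\lambda_{\min}(A_\pm)\ge c_\pm-|b_\pm|^2/(a_\pm-c_\pm)$. Averaging and using $|b_+|^2+|b_-|^2=\tfrac18(|D_1F|^2+|D_2F|^2)\le\eps^6/4$ gives
\[
k_{g_\eps}\ge\tfrac18r^2-2(|b_+|^2+|b_-|^2)\ge\tfrac{\eps^4}{8}(1-z^2)-\tfrac{\eps^6}{2}.
\]

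\emph{On Gershgorin.} A Gershgorin bound on $O(\eps^3)$ entries would lose $O(\eps^3)$, which cannot be absorbed into $-\eps^6/2$. The cubic terms are harmless only because they occur solely off the diagonal and enter quadratically through the Schur complement.
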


The leading term in this estimate is nonnegative and has positive
average on the sphere. An explicit conformal change makes the minimum
biorthogonal curvature strictly positive everywhere.

\begin{theorem}\label{thm:main}
For every rank-two lattice $\Lambda\subset\R^2$ and every
$0<\eps\le1/4$, let $g_\eps$ be given by~\eqref{eq:metric}, and set
\begin{align}\label{eq:conformal-factor}
 u_\eps=1+\frac{\eps^4z^2}{24(1+\eps^2)},
 \qquad \widehat g_\eps=u_\eps^2g_\eps.
\end{align}
For every $p\in S^2\times T^2$ and every two-plane
$\sigma\subset T_p(S^2\times T^2)$,
\begin{align}\label{eq:final-bound}
 K_{\widehat g_\eps}^\perp(\sigma)
 \ge\frac{\eps^4}{u_\eps^2}
       \left(\frac{1}{12u_\eps}-\frac{\eps^2}{2}\right)
 \ge\frac{\eps^4}{384}>0.
\end{align}
Moreover, $\widehat g_\eps$ is invariant under translations of $T^2$ and
converges to $g_S+g_T$ in the $C^\infty$ topology as $\eps\to0$.
\end{theorem}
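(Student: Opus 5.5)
We will deduce Theorem~\ref{thm:main} from Theorem~\ref{thm:seed} via the conformal transformation law for curvature in dimension four. For $\widehat g=e^{2f}g$ with $f=\log u$, the $(0,4)$ curvature tensor changes by a Kulkarni--Nomizu product:
\begin{align*}
 \widehat R=e^{2f}\bigl(R-h\owedge g\bigr),\qquad h=\nabla^2 f-df\otimes df+\tfrac12|df|^2g.
\end{align*}
For a $g$-orthonormal pair $e_1,e_2$, a $\widehat g$-orthonormal pair is $e_i/u$. Hence
\[
 K_{\widehat g}(\sigma)=u^{-2}\bigl(K_g(\sigma)-h(e_1,e_1)-h(e_2,e_2)\bigr).
\]
Conformal changes preserve orthogonal complements, so $\sigma^{\perp}$ is the same for $g$ and $\widehat g$. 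Summing over $\sigma$ and $\sigma^\perp$ gives
\[
 K^\perp_{\widehat g}(\sigma)=u^{-2}\Bigl(K^\perp_g(\sigma)-\tfrac12\operatorname{tr}_g h\Bigr),
\qquad
\operatorname{tr}_gh=\Delta f+|df|^2 ,
\]
with the sign convention $\Delta=\operatorname{tr}\nabla^2$. The key feature is that this correction is independent of the plane $\sigma$. Writing $f=\log u$, we get $\Delta f+|df|^2=\Delta u/u$. Therefore
\[
 K^\perp_{\widehat g}(\sigma)\ge u^{-2}\Bigl(k_g-\frac{\Delta_g u}{2u}\Bigr).
\]

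Next we compute $\Delta_{g_\eps}(z^2)$. Here $z$ depends only on the $S^2$ factor. The metric $g_\eps$ is a connection-type metric, namely the pullback under the fiberwise map of $g_S+g_T$. Its volume form is that of the product, since the shear map has determinant one. The Laplacian of a function pulled back from $S^2$ is therefore $\operatorname{div}(\nabla^{g_\eps}\phi)$.

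We compute the inverse metric. Covectors $d\phi$ pair with $V$ only, so
\[
 |d\phi|^2_{g_\eps}=|d\phi|_{g_S}^2+\eps^2\bigl(d\phi(X_1)^2+d\phi(X_2)^2\bigr).
\]
The gradient is $\nabla\phi=\nabla^S\phi+\eps^2\sum_i d\phi(X_i)X_i+\eps\sum_i d\phi(X_i)\partial_{t_i}$. The $\partial_{t_i}$ components have coefficients independent of $t$, so they are divergence-free. This yields
\[
 \Delta_{g_\eps}\phi=\Delta_S\phi+\eps^2\sum_{i=1,2}X_i(X_i\phi),
\]
because the Killing fields $X_i$ are divergence-free.

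For $\phi=z^2$ we have $\Delta_S z^2=2-6z^2$ and $X_i(X_i z^2)=2(X_iz)^2+2zX_iX_iz$. Since $X_1z=-p_2$ and $X_2 z=p_1$, and $X_iX_iz=-z$, summing over $i=1,2$ gives $2(1-z^2)-4z^2$. Hence
\[
 \Delta(z^2)=2-6z^2+\eps^2(2-6z^2)=(1+\eps^2)(2-6z^2).
\]
Multiplying by the coefficient $\eps^4/(24(1+\eps^2))$ gives
\[
 \Delta u=\frac{\eps^4}{12}(1-3z^2).
\]

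Combining this with Theorem~\ref{thm:seed} and writing $w=1-z^2$, we obtain
\[
 k_g-\frac{\Delta u}{2u}\ge \frac{\eps^4}{8}w-\frac{\eps^6}{2}-\frac{\eps^4}{24u}(1-3z^2).
\]
Since $1-3z^2=3w-2$, the remaining task is to show
\[
 \frac{w}{8}-\frac{3w-2}{24u}\ge \frac{1}{12u}.
\]
This is equivalent to $w/8-w/(8u)\ge0$, which holds because $u\ge1$ and $w\ge0$. This gives the first inequality in \eqref{eq:final-bound}.

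For the second inequality we need $u\le 1+\eps^4/24\le 1+1/6144$ when $\eps\le1/4$. Then $\tfrac1{12u}-\tfrac{\eps^2}{2}\ge \tfrac1{12u}-\tfrac1{32}$, and $u^{-2}$ is close to one. A crude numeric check should give at least $1/384$; this arithmetic is routine, and if the constant turns out to be tight we will track it more carefully.

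Translation invariance holds because neither $g_\eps$ nor $u$ depends on $t$. For the $C^\infty$ convergence, the coefficients of $g_\eps$ in the frame $(\cdot,\partial_{t_i})$ are polynomials in $\eps$ with smooth coefficients, and $u\to1$ smoothly. Both therefore converge to their values at $\eps=0$, which gives $g_S+g_T$.

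The main obstacle is the Laplacian computation on the non-product metric: showing that the volume form is unchanged and that the $\partial_t$ terms drop out of the divergence, with correct signs. The algebra that makes the $w$-terms collapse is a consistency check on the chosen constant $24(1+\eps^2)$.
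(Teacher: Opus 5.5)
Your proposal is correct and is essentially the paper's proof. It uses the same plane-independent conformal law $K^\perp_{u^2g}=u^{-2}\bigl(K^\perp_g-\Delta_gu/(2u)\bigr)$ and the same identity $\Delta_{g_\eps}z^2=(1+\eps^2)(2-6z^2)$. You derive that identity from the determinant-one shear and divergence-free vector fields rather than from the paper's frame formula and Lemma~\ref{lem:connection}, which is equally valid. The final algebra is the same, reducing to $\tfrac{w}{8}\bigl(1-\tfrac{1}{u}\bigr)\ge0$.

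There are two harmless slips. First, in fact $X_1z=y$ and $X_2z=-x$, but this does not matter because only $(X_iz)^2$ and $X_iX_iz=-z$ enter. Second, the last constant needs no extra care: already $1\le u_\eps<2$ and $\eps^2\le1/16$ give $\tfrac{1}{12u_\eps}-\tfrac{\eps^2}{2}\ge\tfrac{1}{24}-\tfrac{1}{32}=\tfrac{1}{96}$ and $u_\eps^{-2}\ge\tfrac14$, hence the bound $\eps^4/384$.
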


For example, write $dp$ for the differential of the inclusion
$S^2\subset\R^3$ and regard $X_i\,dt_i$ as an $\R^3$-valued one-form.
The choice $\eps=1/4$ gives
\begin{align}\label{eq:explicit}
 \widehat g=
 \left(1+\frac{z^2}{6528}\right)^2
 \left(\left|dp-\tfrac14X_1\,dt_1-\tfrac14X_2\,dt_2\right|_{\R^3}^2
       +dt_1^2+dt_2^2\right),
\end{align}
with $K_{\widehat g}^\perp\ge1/98304$.

Since $\pi_1(S^2\times T^2)=\mathbb Z^2$, Theorem~\ref{thm:main}
shows that positive biorthogonal curvature on a closed four-manifold
does not force its fundamental group to be virtually free. This
distinguishes its topological restrictions from those of positive
isotropic curvature: Chen, Tang, and Zhu~\cite[Corollary~1]{CTZ} proved
that every closed four-manifold with positive isotropic curvature has
virtually free fundamental group. Thus the construction addresses a
specific abelian group in Bettiol's broader question.

There are two geometric obstructions to account for. The round--flat
product has $k_g\equiv0$, and its conformal class contains no metric with
positive biorthogonal curvature; see Remark~\ref{rem:product-conformal}.
Also, every metric of nonnegative sectional curvature on
$S^2\times T^2$ has $k_g\equiv0$. Indeed, Hodge theory and the Bochner
formula, together with $b_1(S^2\times T^2)=2$, give two linearly independent
parallel vector fields. Their Gram matrix is constant, so they can be
replaced by orthonormal parallel fields $P_1,P_2$. At any point, choose
orthonormal vectors $Q_1,Q_2$ in their orthogonal complement. The
complementary planes $\operatorname{span}\{P_1,Q_1\}$ and
$\operatorname{span}\{P_2,Q_2\}$ are flat, proving the assertion. The
standard model $S^2\times\R^2$ likewise has complementary flat mixed
planes, so it does not satisfy the strict curvature hypothesis in
Hoelzel's codimension-three surgery criterion~\cite[Theorem~A]{Hoelzel}.

The strategy of following a geometric deformation by a conformal
correction has a precedent in~\cite{Bettiol14}. The conformal identity
used here belongs to the modified scalar-curvature framework of
Costa~\cite{Costa12} and Costa--Ribeiro~\cite{CR14}. Pigazzini's related
preprint~\cite{Pigazzini} concerns the curvature of an affine connection
with nonzero torsion on the standard product $S^2\times T^2$; it addresses
a different curvature problem from the Levi--Civita existence question
above. The recent preprint of Brendle and Hung~\cite{BH} constructs
positive sectional curvature on $S^2\times S^2$ by a third-order
perturbation of a Cheeger--M\"uter metric. Their work provides a further reason to examine
higher-order terms when the first variation is degenerate. Here the
background is a connection metric over a flat torus, and the proof uses
the special cancellation in the biorthogonal curvature.    A related
construction of Pigazzini~\cite{Pigazzini} concerns curvature defined by an
affine connection, without requiring metric compatibility. Theorem
\ref{thm:main} concerns the Levi--Civita curvature of a Riemannian metric.

\section{Preliminaries}\label{sec:preliminaries}

Let $(M^4,g)$ be an oriented Riemannian four-manifold, and let $\nabla$
be its Levi--Civita connection. We use the convention
\begin{align*}
 R(X,Y)Z=\nabla_X\nabla_YZ-\nabla_Y\nabla_XZ-\nabla_{[X,Y]}Z.
\end{align*}
The induced inner product on $\Lambda^2T_pM$ is characterized by
\begin{align*}
 \ip{X\wedge Y}{Z\wedge W}
 =g(X,Z)g(Y,W)-g(X,W)g(Y,Z).
\end{align*}
The curvature operator $\Sc:\Lambda^2TM\to\Lambda^2TM$ is the
self-adjoint endomorphism defined by
\begin{align}\label{eq:curvature-operator}
 \ip{\Sc(X\wedge Y)}{Z\wedge W}=g(R(X,Y)W,Z).
\end{align}
We also write $\Sc(\alpha,\beta)=\ip{\Sc\alpha}{\beta}$ for its
associated symmetric bilinear form. If $X,Y$ are orthonormal, then
\begin{align*}
 K_g(\operatorname{span}\{X,Y\})
 =\Sc(X\wedge Y,X\wedge Y).
\end{align*}
In particular, the round unit sphere has positive sectional curvature
with this convention.

\subsection{Riemannian submersions}
Let $\pi:(M,g)\to(B,g_B)$ be a Riemannian submersion; see \cite{B1987}. Thus
$\mathcal V=\ker d\pi$ and $\mathcal H=\mathcal V^\perp$ are its
vertical and horizontal distributions, and
$d\pi|_{\mathcal H_p}:\mathcal H_p\to T_{\pi(p)}B$ is an isometry
at every point $p$. Denote the corresponding orthogonal projections by
$\mathsf v$ and $\mathsf h$. O'Neill's tensors are
\begin{align*}
 \mathsf A_EF
 &=\mathsf v\nabla_{\mathsf hE}(\mathsf hF)
   +\mathsf h\nabla_{\mathsf hE}(\mathsf vF),\\
 \mathsf T_EF
 &=\mathsf h\nabla_{\mathsf vE}(\mathsf vF)
   +\mathsf v\nabla_{\mathsf vE}(\mathsf hF),
\end{align*}
where $E,F$ are arbitrary vector fields. For horizontal fields $X,Y$
and a vertical field $U$, they satisfy
\begin{align*}
 \mathsf A_XY=\tfrac12\mathsf v[X,Y],\qquad
 \ip{\mathsf A_XU}{Y}=-\ip{U}{\mathsf A_XY}.
\end{align*}
The fibers are totally geodesic if and only if $\mathsf T=0$.

We shall use the following specialization of the submersion formulas
of~\cite{ONeill}. Suppose that the fibers are totally geodesic. At a
point $p$, let $X,Y$ be orthonormal horizontal vectors and let $U,V$
be orthonormal vertical vectors. Then
\begin{align}\label{eq:oneill-sectional}
 K_g(X,Y)&=K_{g_B}(d\pi X,d\pi Y)-3|\mathsf A_XY|^2,\notag\\
 K_g(U,V)&=K_{\pi^{-1}(\pi(p))}(U,V),\\
 K_g(X,U)&=|\mathsf A_XU|^2.\notag
\end{align}
Here $K_g(X,Y)$ denotes the sectional curvature of the plane spanned
by $X,Y$, and each fiber carries its induced metric.

\subsection{The Hodge decomposition and biorthogonal curvature}
Let $\mathbf v_g\in\Lambda^4T_pM$ be the positively oriented unit
four-vector. The Hodge star $*:\Lambda^2T_pM\to\Lambda^2T_pM$ is
defined by
\begin{align*}
 \alpha\wedge *\beta=\ip{\alpha}{\beta}\,\mathbf v_g,
 \qquad \alpha,\beta\in\Lambda^2T_pM.
\end{align*}
It is an isometric involution. Its eigenspaces
$\Lambda_p^\pm=\{\alpha:*\alpha=\pm\alpha\}$ have dimension three
and give the orthogonal decomposition
\begin{align*}
 \Lambda^2T_pM=\Lambda_p^+\oplus\Lambda_p^-.
\end{align*}
If $e_1,e_2,e_3,e_4$ is an oriented orthonormal basis of $T_pM$, then
orthonormal bases of $\Lambda_p^\pm$ are
\begin{align}\label{eq:hodge-basis}
 \omega_1^\pm&=\tfrac1{\sqrt2}(e_1\wedge e_2\pm e_3\wedge e_4),\notag\\
 \omega_2^\pm&=\tfrac1{\sqrt2}(e_1\wedge e_3\mp e_2\wedge e_4),\\
 \omega_3^\pm&=\tfrac1{\sqrt2}(e_1\wedge e_4\pm e_2\wedge e_3).\notag
\end{align}
These pointwise decompositions define the bundles $\Lambda^\pm$.
Write $P_\pm=\tfrac12(I\pm *)$ for the orthogonal projections and set
\begin{align*}
 A_\pm=P_\pm\Sc|_{\Lambda^\pm},\qquad
 B=P_+\Sc|_{\Lambda^-}.
\end{align*}
Thus $A_\pm$ are the diagonal compressions of $\Sc$, and
\begin{align}\label{eq:curvature-blocks}
 \Sc=\begin{pmatrix}A_+&B\\ B^*&A_-\end{pmatrix},\qquad
 A_\pm=W_\pm+\frac{s_g}{12}I_{\Lambda^\pm},
\end{align}
where $s_g=\operatorname{tr}_g\operatorname{Ric}_g$ is the scalar
curvature and $W_\pm$ are the self-dual and anti-self-dual Weyl
curvature operators. More precisely, put
$h=\operatorname{Ric}_g-\tfrac14s_g g$ and define $h^\sharp$ by
$g(h^\sharp X,Y)=h(X,Y)$. The traceless Ricci contribution is the
endomorphism
\begin{align*}
 \mathcal E_h(X\wedge Y)
 =\tfrac12\bigl(h^\sharp X\wedge Y+X\wedge h^\sharp Y\bigr).
\end{align*}
It interchanges $\Lambda^+$ and $\Lambda^-$, while the Weyl operator
$W=\Sc-\mathcal E_h-\tfrac{s_g}{12}I$ preserves them. Consequently,
$W_\pm=W|_{\Lambda^\pm}$ and
$B=P_+\mathcal E_h|_{\Lambda^-}$. This is the standard
four-dimensional curvature decomposition; see~\cite[\S2]{CR14}.

The following formula explains why only the two diagonal blocks enter
biorthogonal curvature. We include its proof to fix the normalization;
see also~\cite[\S2]{CR14}.

\begin{lemma}\label{lem:biorthogonal-minimum}
For every oriented Riemannian four-manifold $(M,g)$ and every $p\in M$,
\begin{align}\label{eq:min}
 k_g(p)=\frac12\bigl(\lambda_{\min}(A_+(p))
                      +\lambda_{\min}(A_-(p))\bigr).
\end{align}
\end{lemma}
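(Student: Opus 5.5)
Fix $p$ and a two-plane $\sigma=\operatorname{span}\{X,Y\}$ with $X,Y$ orthonormal, and complete to an oriented orthonormal basis $e_1=X,e_2=Y,e_3,e_4$. Then $\sigma^{\perp_g}=\operatorname{span}\{e_3,e_4\}$, and the unit bivector $\xi=e_1\wedge e_2$ satisfies $*\xi=e_3\wedge e_4$. Write $\xi=\tfrac1{\sqrt2}(\omega_1^++\omega_1^-)$ and $*\xi=\tfrac1{\sqrt2}(\omega_1^+-\omega_1^-)$ using \eqref{eq:hodge-basis}. Then
\begin{align*}
 K_g(\sigma)+K_g(\sigma^{\perp_g})=\Sc(\xi,\xi)+\Sc(*\xi,*\xi).
\end{align*}
Expanding with the block form \eqref{eq:curvature-blocks}, the cross terms $\pm\Sc(\omega_1^+,\omega_1^-)$ cancel. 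This leaves
\begin{align*}
 K_g^\perp(\sigma)=\tfrac12\bigl(\ip{A_+\omega_1^+}{\omega_1^+}+\ip{A_-\omega_1^-}{\omega_1^-}\bigr).
\end{align*}
Since $\omega_1^\pm$ are unit vectors in $\Lambda^\pm_p$, the Rayleigh quotient bound immediately gives $K_g^\perp(\sigma)\ge\tfrac12(\lambda_{\min}(A_+)+\lambda_{\min}(A_-))$. Hence $k_g(p)$ is at least the right-hand side of \eqref{eq:min}.

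For the reverse inequality, I must show the minimum is attained. I use the classical fact that the map $\sigma\mapsto(\omega^+,\omega^-)$, with $\omega^\pm=\sqrt2\,P_\pm\xi$, is a bijection from oriented two-planes onto $S^2(\Lambda^+_p)\times S^2(\Lambda^-_p)$, the product of the unit spheres. Indeed, a unit bivector $\xi$ is decomposable exactly when $\xi\wedge\xi=0$, and this condition reads $|P_+\xi|^2=|P_-\xi|^2$, which must then equal $\tfrac12$. Conversely, given unit $\omega^\pm\in\Lambda^\pm_p$, set $\xi=\tfrac1{\sqrt2}(\omega^++\omega^-)$. Then $\ip{\xi}{\xi}=1$ and $\xi\wedge\xi=\tfrac12(|\omega^+|^2-|\omega^-|^2)\mathbf v_g=0$, so $\xi$ is decomposable, $\xi=X\wedge Y$. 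Its orthogonal complement plane corresponds to $*\xi=\tfrac1{\sqrt2}(\omega^+-\omega^-)$.

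Choosing $\omega^\pm$ to be unit eigenvectors of $A_\pm$ for their minimal eigenvalues then produces a plane $\sigma$ with $K_g^\perp(\sigma)$ equal to the right-hand side of \eqref{eq:min}.

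The only delicate point is this surjectivity onto $S^2\times S^2$, that is, the Plücker criterion that $\xi\wedge\xi=0$ characterizes decomposable two-vectors in dimension four. I would verify it directly by a short normal-form argument. A skew endomorphism of $\R^4$ can be brought to block form $a\,e_1\wedge e_2+b\,e_3\wedge e_4$, and $\xi\wedge\xi=2ab\,e_1\wedge e_2\wedge e_3\wedge e_4$ vanishes only if $a=0$ or $b=0$. I also need the identity $\alpha\wedge\alpha=(|P_+\alpha|^2-|P_-\alpha|^2)\mathbf v_g$, which follows from the definition $\alpha\wedge*\beta=\ip{\alpha}{\beta}\mathbf v_g$ together with $*^2=I$.
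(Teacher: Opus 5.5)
Your proposal is correct and follows essentially the same route as the paper. The paper also characterizes unit simple bivectors as $(\xi+\eta)/\sqrt2$ with unit $\xi\in\Lambda_p^+$, $\eta\in\Lambda_p^-$, using the normal form and $\alpha\wedge\alpha=(|\alpha_+|^2-|\alpha_-|^2)\mathbf v_g$; it then notes that $*\alpha=(\xi-\eta)/\sqrt2$, so the off-diagonal block cancels in the average, and minimizes the two Rayleigh quotients independently, whereas you split this into a lower bound in an adapted frame plus a separate attainment step.
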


\begin{proof}
All calculations take place in $\Lambda^2T_pM$. A nonzero bivector
$\alpha$ is simple if and only if $\alpha\wedge\alpha=0$.
Indeed, in a suitable orthonormal basis it has the form
$a e_1\wedge e_2+b e_3\wedge e_4$, whose exterior square vanishes
exactly when $ab=0$. Write $\alpha=\alpha_++\alpha_-$ according to
the Hodge decomposition. Then
\begin{align*}
 \alpha\wedge\alpha
 =\bigl(|\alpha_+|^2-|\alpha_-|^2\bigr)\mathbf v_g.
\end{align*}
Hence a unit bivector is simple exactly when
$|\alpha_+|=|\alpha_-|=1/\sqrt2$. The oriented unit simple
bivectors are therefore precisely
\begin{align*}
 \alpha=\frac{\xi+\eta}{\sqrt2},\qquad
 \xi\in\Lambda_p^+,\quad\eta\in\Lambda_p^-,\quad
 |\xi|=|\eta|=1.
\end{align*}
Its complementary plane is represented by
$*\alpha=(\xi-\eta)/\sqrt2$. The off-diagonal blocks cancel in
the average, so
\begin{align*}
 K_g^\perp(\alpha)
 &=\tfrac12\bigl(\ip{\Sc\alpha}{\alpha}
                  +\ip{\Sc(*\alpha)}{*\alpha}\bigr)\\
 &=\tfrac12\bigl(\ip{A_+\xi}{\xi}+\ip{A_-\eta}{\eta}\bigr).
\end{align*}
The two unit vectors vary independently. Minimizing their Rayleigh
quotients proves~\eqref{eq:min}.
\end{proof}

\subsection{Conformal changes}
We use $\Delta_g=\operatorname{div}_g\nabla$, so
$\Delta_gf=\operatorname{tr}_g\nabla^2f$. The conformal law below
is part of the modified scalar-curvature framework
of~\cite{Costa12,CR14}.

\begin{lemma}\label{lem:conformal}
Let $(M^4,g)$ be a Riemannian four-manifold and let $u$ be a positive
smooth function. For every two-plane $\sigma\subset T_pM$,
\begin{align*}
 K_{u^2g}^\perp(\sigma)
 =u^{-2}\left(K_g^\perp(\sigma)-\frac{\Delta_gu}{2u}\right).
\end{align*}
Consequently,
\begin{align}\label{eq:conformal}
 k_{u^2g}=u^{-2}\left(k_g-\frac{\Delta_gu}{2u}\right).
\end{align}
\end{lemma}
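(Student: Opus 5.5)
We will prove Lemma~\ref{lem:conformal} with the standard conformal transformation law for the curvature operator and the block description of Lemma~\ref{lem:biorthogonal-minimum}. Set $\tilde g=u^2g=e^{2f}g$ with $f=\log u$. The Weyl tensor as a $(1,3)$-tensor is conformally invariant. On $\Lambda^2$, unit bivectors for $\tilde g$ are $u^{-2}$ times unit bivectors for $g$, so the sectional curvature of a fixed plane transforms as
\begin{align*}
 K_{\tilde g}(\sigma)=u^{-2}\Bigl(K_g(\sigma)-\bigl(\nabla^2 f-df\otimes df+\tfrac12|df|^2 g\bigr)(X,X)-\bigl(\nabla^2 f-df\otimes df+\tfrac12|df|^2 g\bigr)(Y,Y)\Bigr)
\end{align*}
for a $g$-orthonormal basis $X,Y$ of $\sigma$. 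This is the usual formula $\tilde R=e^{2f}(R-T\owedge g)$ with $T=\nabla^2f-df\otimes df+\frac12|df|^2g$, written with the paper's sign convention.

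The key step is to add this expression for $\sigma=\operatorname{span}\{e_1,e_2\}$ and for $\sigma^{\perp}=\operatorname{span}\{e_3,e_4\}$. Orthogonal complements agree for $g$ and $\tilde g$, since conformal metrics have the same orthogonality. The correction terms then sum to the full trace:
\begin{align*}
 \operatorname{tr}_gT=\Delta_g f-|df|^2+2|df|^2=\Delta_gf+|df|^2.
\end{align*}
With $f=\log u$ we have $\Delta_gf=\Delta_gu/u-|du|^2/u^2$ and $|df|^2=|du|^2/u^2$, so $\operatorname{tr}_gT=\Delta_gu/u$. The gradient terms cancel; this cancellation is the special feature of biorthogonal curvature in dimension four. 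Halving gives
\begin{align*}
 K^\perp_{u^2g}(\sigma)=u^{-2}\bigl(K_g^\perp(\sigma)-\Delta_gu/(2u)\bigr).
\end{align*}

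As a cross-check via Lemma~\ref{lem:biorthogonal-minimum}: the correction $T\owedge g$ acts on $\Lambda^2$ as $\mathcal E$-type terms, which swap $\Lambda^\pm$, plus $\frac{\operatorname{tr}T}{2}\,I$ times a constant. Only the trace part survives in $A_\pm$, consistent with the off-diagonal blocks cancelling in the average.

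For \eqref{eq:conformal}, the right-hand side depends on $\sigma$ only through $K_g^\perp(\sigma)$, with a positive factor $u^{-2}$ and a $\sigma$-independent shift. Minimizing over planes at $p$, which are the same set for both metrics, yields $k_{u^2g}=u^{-2}(k_g-\Delta_gu/(2u))$.

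The main obstacle is bookkeeping of signs and normalizations in the conformal curvature formula under the paper's conventions: $\Delta=\operatorname{tr}\nabla^2$, with the round sphere positive. I will fix these by checking against the known scalar curvature law
\begin{align*}
 s_{u^2g}=u^{-2}\bigl(s_g-6\Delta_gu/u\bigr)
\end{align*}
in dimension four. Summing $K^\perp$ over the three complementary pairs of a frame gives $s/2$, and $3\cdot\Delta u/(2u)\cdot 2=3\Delta u/u$, which matches the scalar law halved.
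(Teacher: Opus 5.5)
Your proposal is correct and is essentially the paper's argument. The paper derives the same plane-wise law $e^{2f}K_{\widetilde g}(X,Y)=K_g(X,Y)-\nabla^2f(X,X)-\nabla^2f(Y,Y)+df(X)^2+df(Y)^2-|df|_g^2$ from the connection difference tensor rather than quoting the Kulkarni--Nomizu form, then averages over $\sigma$ and $\sigma^{\perp}$ and uses $\Delta_gf+|df|_g^2=u^{-1}\Delta_gu$, exactly as you do. There is one slip in your optional scalar-curvature sanity check, which does not affect the proof: the sum of $K^\perp$ over the three complementary pairs of a frame is $s_g/4$, not $s_g/2$, and your extra factor of $2$ compensates for it.
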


\begin{proof}
Put $f=\log u$ and $\widetilde g=e^{2f}g$. The difference of the
two Levi--Civita connections is
\begin{align*}
 C(X,Y):=\widetilde\nabla_XY-\nabla_XY
 =df(X)Y+df(Y)X-g(X,Y)\nabla f.
\end{align*}
For $g$-orthonormal vectors $X,Y$, substitute this expression into
\begin{align*}
 \widetilde R(X,Y)Y
 &=R(X,Y)Y+(\nabla_XC)(Y,Y)-(\nabla_YC)(X,Y)\\
 &\quad+C(X,C(Y,Y))-C(Y,C(X,Y)).
\end{align*}
The derivative and quadratic terms have respective inner products
with $X$ given by
\begin{align*}
 \ip{(\nabla_XC)(Y,Y)-(\nabla_YC)(X,Y)}{X}
 &=-\nabla^2f(X,X)-\nabla^2f(Y,Y),\\
 \ip{C(X,C(Y,Y))-C(Y,C(X,Y))}{X}
 &=df(X)^2+df(Y)^2-|df|_g^2.
\end{align*}
Pairing $\widetilde R(X,Y)Y$ with $X$ using $\widetilde g$ contributes
a factor $e^{2f}$, while $|X\wedge Y|_{\widetilde g}^2=e^{4f}$.
It follows that
\begin{align*}
 e^{2f}K_{\widetilde g}(X,Y)
 &=K_g(X,Y)-\nabla^2f(X,X)-\nabla^2f(Y,Y)\\
 &\quad+df(X)^2+df(Y)^2-|df|_g^2.
\end{align*}
Choose a $g$-orthonormal frame $e_1,e_2,e_3,e_4$ with
$\sigma=\operatorname{span}\{e_1,e_2\}$. A conformal change
preserves orthogonal complements. Applying the preceding formula
to the planes spanned by $e_1,e_2$ and by $e_3,e_4$, and taking
their average, gives
\begin{align*}
 e^{2f}K_{\widetilde g}^\perp(\sigma)
 =K_g^\perp(\sigma)-\tfrac12\bigl(\Delta_gf+|df|_g^2\bigr).
\end{align*}
Since $u^{-1}\Delta_gu=\Delta_gf+|df|_g^2$, this proves the
plane-wise formula. The correction term is independent of $\sigma$;
taking the minimum proves~\eqref{eq:conformal}.
\end{proof}

\begin{remark}\label{rem:product-conformal}
For the round--flat product $g_0=g_S+g_T$ on $S^2\times T^2$,
all sectional curvatures are nonnegative, and complementary mixed
planes are both flat. Thus $k_{g_0}\equiv0$. If $u>0$ is smooth,
at a minimum point $p$ of $u$ we have $\Delta_{g_0}u(p)\ge0$.
Lemma~\ref{lem:conformal} then gives $k_{u^2g_0}(p)\le0$.
Consequently, no metric conformal to $g_0$ has positive biorthogonal
curvature everywhere.
\end{remark}

\section{Connection and curvature}\label{sec:connection}

Let $\pi:S^2\times T^2\to T^2$ be the projection. Its vertical
space at $(p,t)$ is $\mathcal V_{(p,t)}=T_pS^2\times\{0\}$, and its
horizontal space is the $g_\eps$-orthogonal complement
$\mathcal H_{(p,t)}=\mathcal V_{(p,t)}^{\perp_{g_\eps}}$.
Polarizing~\eqref{eq:metric} gives
\begin{align*}
 g_\eps\bigl((V,a,b),(W,c,d)\bigr)
 &=\ip{V-\eps aX_1-\eps bX_2}{W-\eps cX_1-\eps dX_2}_{g_S}
   +ac+bd.
\end{align*}
Consider the global vector fields
\begin{align*}
 H_1=\partial_{t_1}+\eps X_1,\qquad
 H_2=\partial_{t_2}+\eps X_2.
\end{align*}
In the product splitting they are $(\eps X_1,1,0)$ and
$(\eps X_2,0,1)$. Their sphere components are therefore canceled by the
corresponding terms in the polarized metric. For every vertical vector
$U$ we obtain
\begin{align*}
 g_\eps(H_i,U)=0,\qquad
 g_\eps(H_i,H_j)=\delta_{ij},\qquad i,j\in\{1,2\}.
\end{align*}
Since the horizontal space has dimension two, $H_1,H_2$ form an
orthonormal basis of it. Moreover,
$d\pi(H_i)=\partial_{t_i}$, so $d\pi$ restricts to an isometry from
$\mathcal H_{(p,t)}$ to $(T_tT^2,g_T)$. Thus $\pi$ is a Riemannian
submersion, and $H_i$ is the horizontal lift of $\partial_{t_i}$.

For a vertical vector field $U$, set
\begin{align*}
 F=[H_1,H_2]=-\eps^2X_3,\qquad D_iU=[H_i,U],
\end{align*}
and we also write
\begin{align*}
 F_{ij}=[H_i,H_j],\qquad
 F_{12}=F,\quad F_{21}=-F,\quad F_{11}=F_{22}=0
\end{align*}
 for later use. The identity
$[X_i,X_j]=-X_{E_i\times E_j}$, where $X_A(p)=A\times p$, gives
\begin{align*}
 D_1F=-\eps^3X_2,\qquad D_2F=\eps^3X_1.
\end{align*}
Writing $p=(x,y,z)\in S^2$ and using $|X_1|^2=1-x^2$, $|X_2|^2=1-y^2$,
and $|X_3|^2=1-z^2$, we have
\begin{align}\label{eq:bounds-F}
 |F|^2=\eps^4(1-z^2),\qquad
 |D_1F|^2+|D_2F|^2=\eps^6(1+z^2)\le2\eps^6.
\end{align}
Here and below, inner products of vertical vectors are taken with the
round metric, which is the restriction of $g_\eps$ to each fiber.

The flow of $H_i$ translates the torus variable and rotates the sphere
about the $E_i$-axis. It therefore maps each round fiber isometrically
onto another fiber. Equivalently, for arbitrary vertical fields $U,V$,
\begin{align*}
 H_i\ip{U}{V}=\ip{D_iU}{V}+\ip{U}{D_iV}.
\end{align*}
This identity will be used both in the connection formula and in the
curvature calculation. Denote by $\nabla^S$ the Levi--Civita connection
along the round fibers and by $\nabla$ the Levi--Civita connection of
$g_\eps$.

\begin{lemma}\label{lem:connection}
For arbitrary vertical vector fields $U,V$, the connection $\nabla$
satisfies
\begin{align}
 \nabla_UV&=\nabla^S_UV,\label{eq:LC1}\\
 \nabla_{H_1}H_1&=\nabla_{H_2}H_2=0,
 &\nabla_{H_1}H_2&=\tfrac12F,
 &\nabla_{H_2}H_1&=-\tfrac12F,\label{eq:LC2}\\
 \nabla_VH_1&=-\tfrac12\ip{F}{V}H_2,
 &\nabla_VH_2&=\tfrac12\ip{F}{V}H_1,\label{eq:LC3}\\
 \nabla_{H_i}V&=D_iV+\nabla_VH_i,
 &&i\in\{1,2\}.\label{eq:LC4}
\end{align}
In particular, the sphere fibers are totally geodesic.
\end{lemma}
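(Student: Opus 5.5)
The plan is to use the Koszul formula
\begin{align*}
 2g(\nabla_EF,G)=E g(F,G)+Fg(E,G)-Gg(E,F)+g([E,F],G)-g([E,G],F)-g([F,G],E),
\end{align*}
applied to the frame fields $H_1,H_2$ and arbitrary vertical fields $U,V,W$. First I will record the bracket relations. $[U,V]$ is vertical, since the vertical distribution is the tangent distribution of the fibers. $[H_i,U]=D_iU$ is vertical: the flow of $H_i$ preserves fibers, or directly, $H_i$ differs from $\partial_{t_i}$ by $\eps X_i$, which is tangent to the fibers and independent of $t$. Finally $[H_i,H_j]=F_{ij}$ is vertical. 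For the metric, $g_\eps(H_i,H_j)=\delta_{ij}$ is constant and $g_\eps(H_i,U)=0$. The fiber metric is round and independent of $t$, so $Ug(V,W)$ is the round derivative. The key identity for $H_i g(U,V)$ is the one recorded just before the lemma.

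Each formula then comes from pairing with horizontal and vertical test vectors separately.

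For \eqref{eq:LC1}, pair with $W$ vertical. All terms are fiberwise, so the Koszul formula reduces to the round one and gives $\nabla^S_UV$. Pairing with $H_k$ leaves only
\begin{align*}
 -H_kg(U,V)-g([U,H_k],V)-g([V,H_k],U) = -H_kg(U,V)+g(D_kU,V)+g(D_kV,U),
\end{align*}
which vanishes by the isometry identity. So $\mathsf T=0$, and the fibers are totally geodesic.

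For \eqref{eq:LC2}, pair with $H_k$. All inner products are constant and brackets of $H$'s are vertical, so that component is $0$. Pairing with $W$ gives $2g(\nabla_{H_i}H_j,W)=g(F_{ij},W)-g([H_i,W],H_j)-g([H_j,W],H_i)=g(F_{ij},W)$, hence $\nabla_{H_i}H_j=\tfrac12F_{ij}$.

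For \eqref{eq:LC3}, pair $\nabla_VH_i$ with $W$. The terms are $-Wg(V,H_i)=0$, $g([V,H_i],W)$, $-g([V,W],H_i)=0$, and $-g([H_i,W],V)$, together with $H_ig(V,W)$. The sum is $H_ig(V,W)-g(D_iV,W)-g(D_iW,V)=0$. Pairing with $H_j$ leaves $-g([H_i,H_j],V)$ twice-symmetrized, giving $2g(\nabla_VH_i,H_j)=-g(F_{ij},V)$. This yields $\nabla_VH_1=-\tfrac12\ip{F}{V}H_2$ and $\nabla_VH_2=\tfrac12\ip FV H_1$.

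Finally, \eqref{eq:LC4} is torsion-freeness: $\nabla_{H_i}V-\nabla_VH_i=[H_i,V]=D_iV$.

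The main point needing care is sign bookkeeping in the Koszul terms, together with justifying that $D_iU$ is vertical and that the $H_i$-derivative identity holds for non-invariant $U,V$. That identity is exactly the statement that the flow of $H_i$ acts by fiber isometries, that is, that $X_i$ is Killing for $g_S$.
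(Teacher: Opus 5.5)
Your proposal is correct and is essentially the paper's own argument: the Koszul formula evaluated separately against vertical and horizontal test fields, using constancy of $g_\eps(H_i,H_j)$, verticality of $[U,V]$, $[H_i,U]=D_iU$ and $F_{ij}$, the fiber-isometry identity for $H_i\ip{U}{V}$, and torsion-freeness for \eqref{eq:LC4}. One small wording point: in the horizontal pairing for \eqref{eq:LC3} only the single Koszul term $-g([H_i,H_j],V)$ survives (nothing is ``twice-symmetrized''), and that term alone gives your correct identity $2g(\nabla_VH_i,H_j)=-g(F_{ij},V)$.
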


\begin{proof}
These are the specialized submersion formulas~\cite{ONeill}; we derive
them from the Koszul formula. For arbitrary vector fields $A,B,C$, we have
\begin{align*}
 2\ip{\nabla_AB}{C}
 &=A\ip{B}{C}+B\ip{C}{A}-C\ip{A}{B}\\
 &\quad+\ip{[A,B]}{C}-\ip{[B,C]}{A}-\ip{[A,C]}{B}.
\end{align*}
If $U,V,W$ are vertical, every term on the right is computed within a
round fiber. Hence the vertical component of $\nabla_UV$ is
$\nabla^S_UV$. Its horizontal components satisfy
\begin{align*}
 2\ip{\nabla_UV}{H_i}
 &=-H_i\ip{U}{V}+\ip{D_iU}{V}+\ip{U}{D_iV}=0.
\end{align*}
This proves~\eqref{eq:LC1} and the total geodesicity of the fibers.

Next, orthogonality of the splitting and constancy of
$\ip{H_i}{H_j}=\delta_{ij}$ give
\begin{align*}
 2\ip{\nabla_{H_i}H_j}{U}=\ip{F_{ij}}{U},\qquad
 2\ip{\nabla_{H_i}H_j}{H_k}=0,
\end{align*}
where $i,j,k\in\{1,2\}$. Indeed, all derivative terms vanish, and every bracket in the second
identity is vertical. Thus $\nabla_{H_i}H_j=\tfrac12F_{ij}$, which is
\eqref{eq:LC2}.

For a vertical field $U$, the vertical component of $\nabla_UH_i$
vanishes because, for every vertical $V$,
\begin{align*}
 2\ip{\nabla_UH_i}{V}
 &=H_i\ip{U}{V}-\ip{D_iU}{V}-\ip{U}{D_iV}=0.
\end{align*}
Its horizontal components are $ 2\ip{\nabla_UH_i}{H_j}=-\ip{F_{ij}}{U}.$
Consequently,
\begin{align*}
 \nabla_UH_i=-\frac12\sum_{j=1}^2\ip{F_{ij}}{U}H_j,
\end{align*}
which gives~\eqref{eq:LC3}. Finally, torsion-freeness implies
\begin{align*}
 \nabla_{H_i}U-\nabla_UH_i=[H_i,U]=D_iU,
\end{align*}
and proves~\eqref{eq:LC4}.
\end{proof}

We now apply the curvature convention of
Section~\ref{sec:preliminaries} to $g_\eps$.
At a fixed point $p$, choose an oriented orthonormal vertical basis $e_1,e_2$
and put $e_3=H_1$, $e_4=H_2$. The sphere carries its outward orientation,
and the product is oriented by this basis together with $H_1,H_2$.
Define, for $i,a\in\{1,2\}$,
\begin{align*}
 f_a=\ip{F}{e_a},\qquad r^2=f_1^2+f_2^2,\qquad
 d_{ia}=\ip{D_iF}{e_a},\qquad
 q=\ip{\nabla^S_{e_1}F}{e_2}.
\end{align*}
Since $F=-\eps^2X_3$ is Killing on each round fiber, $\nabla^SF$ is
skew-symmetric, so
\begin{align*}
 \nabla^S_{e_1}F=q e_2,\qquad
 \nabla^S_{e_2}F=-q e_1.
\end{align*}
Moreover, for a tangent vector $U$ to the sphere,
$\nabla^S_UF=-\eps^2(E_3\times U)^{\top}$, where $\top$ denotes
orthogonal projection onto $T_pS^2$. It follows that $|q|\le\eps^2$.
In the sequel, we abbreviate
$\ip{\Sc(e_a\wedge e_b)}{e_c\wedge e_d}$ by $\Sc_{ab,cd}$.

\begin{lemma}\label{lem:curvature}
Apart from the symmetry of the curvature operator, its potentially
nonzero entries in the basis
$e_1\wedge e_2,e_1\wedge e_3,e_1\wedge e_4,
 e_2\wedge e_3,e_2\wedge e_4,e_3\wedge e_4$ are
\begin{align}
 \Sc_{12,12}&=1,
 &\Sc_{34,34}&=-\tfrac34r^2,
 &\Sc_{12,34}&=q,\label{eq:R1}\\
 \Sc_{13,13}&=\Sc_{14,14}=\tfrac14f_1^2,
 &\Sc_{23,23}&=\Sc_{24,24}=\tfrac14f_2^2,\label{eq:R2}\\
 \Sc_{13,23}&=\Sc_{14,24}=\tfrac14f_1f_2,
 &\Sc_{13,24}&=\tfrac12q,
 &\Sc_{14,23}&=-\tfrac12q,\label{eq:R3}\\
 \Sc_{34,13}&=-\tfrac12d_{11},
 &\Sc_{34,14}&=-\tfrac12d_{21},\label{eq:R4}\\
 \Sc_{34,23}&=-\tfrac12d_{12},
 &\Sc_{34,24}&=-\tfrac12d_{22}.\label{eq:R5}
\end{align}
All unlisted entries vanish. In particular, every entry with three
vertical arguments and one horizontal argument is zero.
\end{lemma}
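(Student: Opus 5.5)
We compute every component $\Sc_{ab,cd}=\ip{R(e_a,e_b)e_d}{e_c}$ directly from Lemma~\ref{lem:connection}, after extending $e_1,e_2$ to local vertical fields and using $H_1,H_2$ as global horizontal fields. Since the entries are tensorial, the extension of $e_1,e_2$ may be chosen freely. We take it $\nabla^S$-parallel at the base point along the fiber, so that $\nabla^S_{e_a}e_b=0$ there. The computation splits according to the number of horizontal arguments: zero, one, two (mixed or both in one slot), three, and four.

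The easy cases come first. With four vertical arguments, the fibers are totally geodesic and flat in the normal directions. The Gauss equation, or directly \eqref{eq:LC1} together with the vanishing of the horizontal part of $\nabla_UV$, gives $\Sc_{12,12}=K_{S^2}=1$. The submersion formulas \eqref{eq:oneill-sectional} give $\Sc_{34,34}=K_{T^2}-3|\mathsf A_{H_1}H_2|^2=-\tfrac34|F|^2=-\tfrac34 r^2$. Similarly $K(e_a,H_i)=|\mathsf A_{H_i}e_a|^2$, and \eqref{eq:LC3} gives $\mathsf A_{H_1}e_a=-\tfrac12 f_a H_2$ and $\mathsf A_{H_2}e_a=\tfrac12 f_a H_1$, hence the diagonal entries $\tfrac14 f_a^2$. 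The same formula polarized in the vertical slot gives $\Sc_{13,23}=\Sc_{14,24}=\tfrac14 f_1f_2$. One also needs to check that the $D_i$ terms cancel in these entries. This uses the fact that $D_i$ preserves the fiber metric, so $\ip{D_iU}{V}$ is skew in $U,V$. O'Neill's $\mathsf T=0$ formulas then yield the vanishing of all entries with three vertical arguments and one horizontal one: $\ip{R(U,V)W}{H}=\ip{(\nabla_H\mathsf T)\cdots}{}$ and terms in $\mathsf T$, which are all zero.

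The entries with three horizontal arguments, $\Sc_{34,a i}$, come from O'Neill's formula $\ip{R(X,Y)Z}{U}=\ip{(\nabla_Z\mathsf A)_XY}{U}+(\text{terms in }\mathsf T)$. Concretely, we compute $\ip{R(H_1,H_2)H_i}{e_a}$ using \eqref{eq:LC2}–\eqref{eq:LC4}:
\[
R(H_1,H_2)H_i=\nabla_{H_1}\nabla_{H_2}H_i-\nabla_{H_2}\nabla_{H_1}H_i-\nabla_{F}H_i .
\]
Each term is a combination of $\tfrac12\nabla_{H_j}F=\tfrac12(D_jF+\nabla_FH_j)$ and $\nabla_FH_i$. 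The $\nabla_F H$ pieces are horizontal, so only $\pm\tfrac12 D_jF$ survives in the vertical projection. This produces $\Sc_{34,13}=-\tfrac12 d_{11}$ and the analogous entries, with the index pattern of \eqref{eq:R4}–\eqref{eq:R5}. Entries such as $\Sc_{13,14}$, with two horizontal arguments in a different pattern, are computed in the same way. They vanish because the quadratic terms in $\mathsf A$ produce $f_af_b\ip{H_2}{H_1}=0$.

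The main obstacle is the set of entries involving $q$, namely $\Sc_{12,34}=q$, $\Sc_{13,24}=\tfrac12q$ and $\Sc_{14,23}=-\tfrac12q$. These come from O'Neill's mixed formula $\ip{R(X,U)Y}{V}$, which contains $\ip{(\nabla_U\mathsf A)_XY}{V}$ together with $\mathsf A\mathsf A$ terms. Here we must show three things. The $D_i$ contributions cancel by the skew-symmetry above. The covariant derivative of $\mathsf A$ reduces to $\tfrac12\nabla^S_UF$, which gives the $q$'s. The $\mathsf A\mathsf A$ terms give $\tfrac14 f_af_b$ only in the slots already listed. As a check we will use the first Bianchi identity $\Sc_{13,24}-\Sc_{14,23}+\Sc_{12,34}=0$, up to the sign convention, i.e.\ $\tfrac12q+\tfrac12q=q$. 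We will also verify that $\Sc_{12,34}=q$ agrees with O'Neill's $\ip{R(U,V)X}{Y}=\ip{(\nabla_U\mathsf A)\cdots}{}$ evaluated as $2\ip{\mathsf A_XY}{\ldots}$-type terms. Finally, every remaining index combination is either listed, related by symmetry, or one of the three-vertical cases shown to vanish, which completes the list.
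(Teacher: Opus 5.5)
Your plan is sound in substance, but it reaches the entries by a different route from the paper. The paper never uses the general O'Neill equations. It substitutes the connection of Lemma~\ref{lem:connection} into the definition of $R$ and computes four curvature vectors outright:
\begin{itemize}
\item $R(U,V)W=R^S(U,V)W$, which is vertical, so $\Sc_{12,12}=1$ and the three-vertical entries vanish;
\item the full vector $R(U,H_i)H_j$;
\item the vector $R(U,V)H_2=\ip{\nabla^S_UF}{V}H_1$;
\item the vectors $R(H_1,H_2)H_i$.
\end{itemize}
You instead read off every entry with at most two horizontal arguments from O'Neill's equations with $\mathsf T=0$: Gauss and Codazzi for the fibers, polarization of $K(X,U)=|\mathsf A_XU|^2$, and the mixed equation for the $q$-entries. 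You compute only $\Sc_{34,ai}$ directly, which is the paper's Case IV.

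Your organization explains the shape of the matrix. $\mathsf T=0$ kills the three-vertical entries, $\ip{\mathsf A_{H_i}U}{\mathsf A_{H_j}V}=\tfrac14\ip{F}{U}\ip{F}{V}\delta_{ij}$ gives the $f_af_b$ entries, and $\nabla\mathsf A$ gives $q$. The price is twofold. First, you rely on the polarized O'Neill equations, which the paper does not state. Second, you must translate from the curvature sign used by O'Neill and Besse, which is opposite to the paper's. That translation is what fixes the overall sign of the three $q$-entries, and your Bianchi check cannot detect an error there, since flipping all three preserves the identity. The paper's direct computation is self-contained and settles these signs.

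A few points to tighten.
\begin{itemize}
\item The step you call the main obstacle is one line: $(\nabla_U\mathsf A)_{H_1}H_2=\nabla_U(\tfrac12F)-\mathsf A_{\nabla_UH_1}H_2-\mathsf A_{H_1}\nabla_UH_2=\tfrac12\nabla^S_UF$. This holds because $\nabla_UH_1\parallel H_2$ and $\nabla_UH_2\parallel H_1$ by \eqref{eq:LC3}, and $\mathsf A_{H_j}H_j=0$.
\item In your route no $D_i$ terms occur in entries with two vertical arguments, so there is no cancellation to check. Also, $\ip{D_iU}{V}$ is skew only when $H_i\ip{U}{V}=0$, e.g.\ for an orthonormal frame, not in general.
\item $\Sc_{13,14}=\Sc_{23,24}=0$ needs more than the vanishing $\mathsf A\mathsf A$ term. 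The $\nabla\mathsf A$ term contributes $\tfrac12\ip{\nabla^S_{e_a}F}{e_a}$, which vanishes because $F$ is Killing on the fibers.
\item The first Bianchi identity reads $\Sc_{12,34}-\Sc_{13,24}+\Sc_{14,23}=0$, independently of the sign convention for $R$. Your displayed version has the wrong sign on $\Sc_{12,34}$, though your numerical form $\tfrac12q+\tfrac12q=q$ is the right one. Used correctly, it derives $\Sc_{12,34}$ from the two mixed entries, so you can skip the O'Neill formula for $\ip{R(U,V)X}{Y}$ entirely.
\end{itemize}
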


\begin{proof}
We calculate the entries by the number and position of their vertical
arguments. All vertical fields used below may depend on both variables
of the product.

 {\bf Case I.} Vertical entries.
For vertical fields $U,V,W$, Lemma~\ref{lem:connection} gives
\begin{align*}
 R(U,V)W
 &=\nabla^S_U\nabla^S_VW-\nabla^S_V\nabla^S_UW
      -\nabla^S_{[U,V]}W\\
 &=R^S(U,V)W=\ip{V}{W}U-\ip{U}{W}V.
\end{align*}
Thus $\Sc_{12,12}=1$. The vector $R(U,V)W$ is vertical, so the
curvature symmetries also give
\begin{align*}
 \Sc_{12,13}=\Sc_{12,14}=\Sc_{12,23}=\Sc_{12,24}=0.
\end{align*}

 {\bf Case II.}  {Mixed entries.}
Fix $i,j\in\{1,2\}$ and a vertical field $U$. Substituting the connection
formulas into the three terms defining $R(U,H_i)H_j$, we obtain
\begin{align*}
 \nabla_U\nabla_{H_i}H_j
 &=\frac12\nabla^S_UF_{ij},\\
 \nabla_{H_i}\nabla_UH_j
 &=-\frac12\sum_{k=1}^2H_i\ip{F_{jk}}{U}H_k
   -\frac14\sum_{k=1}^2\ip{F_{jk}}{U}F_{ik},\\
 \nabla_{[U,H_i]}H_j
 &=\frac12\sum_{k=1}^2\ip{F_{jk}}{D_iU}H_k.
\end{align*}
Here the last line uses $[U,H_i]=-D_iU$. Because $D_i$ preserves the
vertical inner product,
\begin{align*}
 H_i\ip{F_{jk}}{U}-\ip{F_{jk}}{D_iU}
 =\ip{D_iF_{jk}}{U}.
\end{align*}
It follows that the full mixed curvature vector is
\begin{align*}
 R(U,H_i)H_j
 &=\frac12\nabla^S_UF_{ij}
   +\frac14\sum_{k=1}^2\ip{F_{jk}}{U}F_{ik}
   +\frac12\sum_{k=1}^2\ip{D_iF_{jk}}{U}H_k.
\end{align*}
Taking its inner product with an arbitrary vertical field $V$ gives
\begin{align*}
 \ip{R(U,H_1)H_1}{V}
 &=\ip{R(U,H_2)H_2}{V}
   =\frac14\ip{F}{U}\ip{F}{V},\\
 \ip{R(U,H_1)H_2}{V}
 &=\frac12\ip{\nabla^S_UF}{V},\\
 \ip{R(U,H_2)H_1}{V}
 &=-\frac12\ip{\nabla^S_UF}{V}.
\end{align*}
The first line, with $U,V\in\{e_1,e_2\}$, proves~\eqref{eq:R2} and
the first two entries in~\eqref{eq:R3}. The last two lines, with
$U=e_1$ and $V=e_2$, give
$\Sc_{13,24}=q/2$ and $\Sc_{14,23}=-q/2$. With $U=V=e_a$ they give
$\Sc_{13,14}=\Sc_{23,24}=0,$
since $\nabla^SF$ is skew-symmetric. These are the two remaining
unlisted entries between mixed bivectors.

 {\bf Case III.} {The vertical--horizontal entry.}
For vertical $U,V$, the formulas for $\nabla_UH_i$ yield
\begin{align*}
 \nabla_U\nabla_VH_2
 &=\frac12 U\ip{F}{V}H_1
   -\frac14\ip{F}{V}\ip{F}{U}H_2,\\
 \nabla_V\nabla_UH_2
 &=\frac12 V\ip{F}{U}H_1
   -\frac14\ip{F}{U}\ip{F}{V}H_2,\\
 \nabla_{[U,V]}H_2&=\frac12\ip{F}{[U,V]}H_1.
\end{align*}
The quadratic terms cancel. Metric compatibility and torsion-freeness
of $\nabla^S$, followed by the Killing identity for $F$, give
\begin{align*}
 R(U,V)H_2
 &=\frac12\bigl(U\ip{F}{V}-V\ip{F}{U}-\ip{F}{[U,V]}\bigr)H_1\\
 &=\frac12\bigl(\ip{\nabla^S_UF}{V}
               -\ip{\nabla^S_VF}{U}\bigr)H_1
  =\ip{\nabla^S_UF}{V}H_1.
\end{align*}
Taking $U=e_1$ and $V=e_2$ proves
$\Sc_{12,34}=\ip{R(e_1,e_2)H_2}{H_1}=q$.

 {\bf Case IV.} {Entries involving the horizontal bivector.}
Finally, since $[H_1,H_2]=F$, we have
\begin{align*}
 R(H_1,H_2)H_1
 &=\nabla_{H_1}\!\left(-\tfrac12F\right)
   -\nabla_{H_2}(0)-\nabla_FH_1\\
 &=-\frac12D_1F-\frac32\nabla_FH_1
  =-\frac12D_1F+\frac34r^2H_2,\\
 R(H_1,H_2)H_2
 &=\nabla_{H_1}(0)-\nabla_{H_2}\!\left(\tfrac12F\right)
   -\nabla_FH_2\\
 &=-\frac12D_2F-\frac32\nabla_FH_2
  =-\frac12D_2F-\frac34r^2H_1.
\end{align*}
The horizontal component in the last line gives
$\Sc_{34,34}=-3r^2/4$, and the vertical components give
\begin{align*}
 \Sc_{34,a3}=-\frac12\ip{D_1F}{e_a}=-\frac12d_{1a},\qquad
 \Sc_{34,a4}=-\frac12\ip{D_2F}{e_a}=-\frac12d_{2a},
 \quad a\in\{1,2\}.
\end{align*}
These are~\eqref{eq:R4}--\eqref{eq:R5}. Together with the preceding
calculations and the symmetry of $\Sc$, they account for every entry
of the curvature operator.
\end{proof}

\section{Proof of Theorem~\ref{thm:seed} and Theorem~\ref{thm:main}}\label{sec:seed}

Apply the Hodge decomposition of Section~\ref{sec:preliminaries} to the
product orientation and the frame $e_1,e_2,H_1,H_2$ of
Section~\ref{sec:connection}. We use the orthonormal bases
$\omega_1^\pm,\omega_2^\pm,\omega_3^\pm$ defined
in~\eqref{eq:hodge-basis}. The notation $A_\pm$ henceforth denotes the
diagonal Hodge blocks of the curvature operator of $g_\eps$.

\begin{lemma}\label{lem:hodge}
In the bases $\omega_1^\pm,\omega_2^\pm,\omega_3^\pm$, the diagonal
Hodge blocks of the curvature operator are
\begin{align}\label{eq:hodge}
 A_\pm=\begin{pmatrix}a_\pm&b_\pm^T\\b_\pm&c_\pm I_2\end{pmatrix},
\end{align}
where $a_\pm=\frac12-\frac{3}{8}r^2\pm q,$ $c_\pm=\frac{1}{8}r^2\mp\frac 12 q,$
 and \begin{align*}
 b_+&=\frac14\binom{-d_{11}+d_{22}}{-d_{21}-d_{12}},
 &b_-&=\frac14\binom{d_{11}+d_{22}}{d_{21}-d_{12}}.
\end{align*}
\end{lemma}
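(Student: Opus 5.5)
Lemma~\ref{lem:hodge} follows by direct substitution of Lemma~\ref{lem:curvature} into the quadratic forms $\ip{\Sc\omega_k^\pm}{\omega_l^\pm}$. With $\omega^\pm$ as in~\eqref{eq:hodge-basis}, each entry expands bilinearly into four entries $\Sc_{ab,cd}$, multiplied by $\tfrac12$ and by the signs coming from the $\pm$ and $\mp$ in the basis. The only care needed is to rewrite entries such as $\Sc_{13,24}$, $\Sc_{34,13}$ and $\Sc_{12,34}$ via the symmetry $\Sc_{ab,cd}=\Sc_{cd,ab}$. Antisymmetry in each pair, as in $\Sc_{24,13}$ versus $\Sc_{42,13}$, is never needed, because the basis uses only the ordered pairs $12,13,14,23,24,34$.

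First I compute the diagonal entries. For $\omega_1^\pm$,
\begin{align*}
 \tfrac12\bigl(\Sc_{12,12}+\Sc_{34,34}\pm2\Sc_{12,34}\bigr)=\tfrac12\bigl(1-\tfrac34r^2\pm2q\bigr)=a_\pm .
\end{align*}
For $\omega_2^\pm$,
\begin{align*}
 \tfrac12\bigl(\Sc_{13,13}+\Sc_{24,24}\mp2\Sc_{13,24}\bigr)=\tfrac12\bigl(\tfrac14f_1^2+\tfrac14f_2^2\mp q\bigr)=c_\pm .
\end{align*}
For $\omega_3^\pm$,
\begin{align*}
 \tfrac12\bigl(\Sc_{14,14}+\Sc_{23,23}\pm2\Sc_{14,23}\bigr)=\tfrac12\bigl(\tfrac14r^2\mp q\bigr)=c_\pm .
\end{align*}
The cross term between $\omega_2^\pm$ and $\omega_3^\pm$ is
\begin{align*}
 \tfrac12\bigl(\Sc_{13,14}\pm\Sc_{13,23}\mp\Sc_{24,14}-\Sc_{24,23}\bigr).
\end{align*}
Here $\Sc_{13,14}=\Sc_{24,23}=0$, and the middle terms cancel because $\Sc_{13,23}=\Sc_{14,24}=\tfrac14f_1f_2$. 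This gives the block $c_\pm I_2$.

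Next I compute the $b_\pm$ column. Since $\Sc_{12,\cdot}$ vanishes on mixed bivectors (Case I), only terms involving $e_3\wedge e_4$ survive. For $\omega_1^\pm$ against $\omega_2^\pm$ this leaves
\begin{align*}
 \tfrac12\bigl(\pm\Sc_{34,13}-\Sc_{34,24}\bigr)=\tfrac14\bigl(\mp d_{11}+d_{22}\bigr),
\end{align*}
which for the upper sign is $\tfrac14(-d_{11}+d_{22})$ and for the lower sign is $\tfrac14(d_{11}+d_{22})$. For $\omega_1^\pm$ against $\omega_3^\pm$ it leaves
\begin{align*}
 \tfrac12\bigl(\pm\Sc_{34,14}+\Sc_{34,23}\bigr)=\tfrac14\bigl(\mp d_{21}-d_{12}\bigr),
\end{align*}
which for the two signs gives $-d_{21}-d_{12}$ and $d_{21}-d_{12}$, up to the factor $\tfrac14$. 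Both agree with the stated $b_\pm$. Finally, the projections $P_\pm$ do not alter these entries, because the $\omega^\pm$ form orthonormal bases of $\Lambda^\pm$.

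There is no genuine obstacle; the main risk is sign bookkeeping. I would check the orientation convention in~\eqref{eq:hodge-basis}, where $\omega_2^\pm$ carries $\mp e_2\wedge e_4$, against the product orientation fixed before Lemma~\ref{lem:curvature}. I would also recheck the signs of $\Sc_{13,24}$ and $\Sc_{14,23}$ from Case II, since those determine whether the $q$ term in $c_\pm$ is $\mp\tfrac12 q$.
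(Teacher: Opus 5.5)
Your proposal is correct and is essentially the paper's own proof: you expand each $\langle\Sc\omega_i^\pm,\omega_j^\pm\rangle$ into four entries of Lemma~\ref{lem:curvature}, use pair symmetry, use the vanishing of the entries $\Sc_{12,ab}$ for mixed $ab$, and note that the $\tfrac14 f_1f_2$ terms cancel in the $(2,3)$ entry. Your signs all agree with the paper, including $c_\pm=\tfrac18 r^2\mp\tfrac12 q$, which comes from $\Sc_{13,24}=\tfrac12 q$ and $\Sc_{14,23}=-\tfrac12 q$, so the orientation recheck you flag raises no actual issue.
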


\begin{proof}
Write $s\in\{+1,-1\}$ for the choice of sign, and put
$(A_s)_{ij}=\ip{\Sc\omega_i^s}{\omega_j^s}$.
Expanding the three diagonal entries and using
Lemma~\ref{lem:curvature} gives
\begin{align*}
 (A_s)_{11}
 &=\tfrac12\bigl(\Sc_{12,12}+2s\Sc_{12,34}+\Sc_{34,34}\bigr)
 =\tfrac12-\tfrac38r^2+sq,\\
 (A_s)_{22}
 &=\tfrac12\bigl(\Sc_{13,13}-2s\Sc_{13,24}+\Sc_{24,24}\bigr)
 =\tfrac18(f_1^2+f_2^2)-\tfrac s2q,\\
 (A_s)_{33}
 &=\tfrac12\bigl(\Sc_{14,14}+2s\Sc_{14,23}+\Sc_{23,23}\bigr)
 =\tfrac18(f_1^2+f_2^2)-\tfrac s2q.
\end{align*}
The off-diagonal entry in the lower $2\times2$ block vanishes:
\begin{align*}
 (A_s)_{23}
 &=\tfrac12\bigl(\Sc_{13,14}+s\Sc_{13,23}
       -s\Sc_{24,14}-\Sc_{24,23}\bigr)\\
 &=\tfrac12\bigl(0+\tfrac s4 f_1f_2
       -\tfrac s4 f_1f_2-0\bigr)=0.
\end{align*}
Here $\Sc_{24,14}=\Sc_{14,24}$ by symmetry. Finally, the entries
with three vertical arguments vanish, and
\eqref{eq:R4}--\eqref{eq:R5} yield
\begin{align*}
 (A_s)_{12}
 &=\tfrac12\bigl(\Sc_{12,13}-s\Sc_{12,24}
       +s\Sc_{34,13}-\Sc_{34,24}\bigr)
 =\tfrac14(-sd_{11}+d_{22}),\\
 (A_s)_{13}
 &=\tfrac12\bigl(\Sc_{12,14}+s\Sc_{12,23}
       +s\Sc_{34,14}+\Sc_{34,23}\bigr)
 =\tfrac14(-sd_{21}-d_{12}).
\end{align*}
These entries and symmetry give~\eqref{eq:hodge} and the stated
formulas for $b_+$ and $b_-$.
\end{proof}

\begin{remark}\label{rem:b-bound}
The off-diagonal coefficient vectors satisfy
\begin{align*}
 |b_+|^2+|b_-|^2
 =\frac18\bigl(d_{11}^2+d_{12}^2+d_{21}^2+d_{22}^2\bigr).
\end{align*}
Since $e_1,e_2$ are orthonormal, by \eqref{eq:bounds-F} we have
\begin{align}\label{eq:b-bound}
 |b_+|^2+|b_-|^2
 =\frac{|D_1F|^2+|D_2F|^2}{8}
 =\frac{\eps^6}{8}(1+z^2)\le\frac{\eps^6}{4}.
\end{align}
\end{remark}

\begin{proof}[{\bf Proof of Theorem~\ref{thm:seed}}]
By Lemma~\ref{lem:biorthogonal-minimum}, it suffices to estimate the least
eigenvalues of the matrices in Lemma~\ref{lem:hodge}.
If $0<\eps\le1/4$, then
\begin{align}\label{eq:gap}
 a_\pm-c_\pm
 =\frac12-\frac{r^2}{2}\pm\frac32q
 \ge\frac12-\frac{\eps^4}{2}-\frac32\eps^2\ge\frac{207}{512}>\frac14.
\end{align}
For a symmetric matrix $A=\left(\begin{smallmatrix}a&b^T\\b&cI\end{smallmatrix}\right)$
with $a>c$, direct diagonalization gives
\begin{align*}
 \lambda_{\min}(A)
 &=c-\frac{2|b|^2}{a-c+\sqrt{(a-c)^2+4|b|^2}}
 \ge c-\frac{|b|^2}{a-c}.
\end{align*}
Since $(c_++c_-)/2=r^2/8$, combining this estimate with
Lemma~\ref{lem:biorthogonal-minimum}, Remark~\ref{rem:b-bound}, and
\eqref{eq:gap} gives
\begin{align*}
 k_{g_\eps}\ge\frac{r^2}{8}-2(|b_+|^2+|b_-|^2)
 \ge\frac{\eps^4}{8}(1-z^2)-\frac{\eps^6}{2}.
\end{align*} 
This completes the proof.
\end{proof}

The choice of conformal factor is dictated by the leading coefficient in
Theorem~\ref{thm:seed}. Its normalized spherical average and its
mean-zero part satisfy
\begin{align*}
 \frac1{4\pi}\int_{S^2}\frac{1-z^2}{8}\,dA=\frac1{12},\qquad
 \Delta_{S^2}\!\left(\frac{z^2}{24}\right)
 =2\left(\frac{1-z^2}{8}-\frac1{12}\right).
\end{align*}
The conformal law of Lemma~\ref{lem:conformal} therefore suggests a
correction proportional to $z^2$. The factor $1+\eps^2$ in
\eqref{eq:conformal-factor} compensates exactly for the horizontal
contribution to the Laplacian of $g_\eps$, as the following computation
shows.

\begin{proof}[{\bf Proof of Theorem~\ref{thm:main}}]
For any smooth function $\varphi$ depending only on the sphere variable,
\eqref{eq:LC1}--\eqref{eq:LC2} give
\begin{align*}
 \Delta_{g_\eps}\varphi
 =\Delta_{S^2}\varphi+\eps^2(X_1^2+X_2^2)\varphi.
\end{align*}
Here we have used the frame formula
$\Delta\varphi=\sum_{a=1}^4(e_ae_a\varphi-(\nabla_{e_a}e_a)\varphi)$
and the identities $H_i\varphi=\eps X_i\varphi$.
Writing $p=(x,y,z)$, the rotation fields satisfy
\begin{align*}
 X_1z=y,\qquad X_1y=-z,\qquad
 X_2z=-x,\qquad X_2x=z.
\end{align*}
Consequently,
\begin{align*}
 (X_1^2+X_2^2)z^2
 =2(y^2-z^2)+2(x^2-z^2)=2-6z^2.
\end{align*}
On the round sphere, $\Delta_{S^2}z=-2z$ and
$|\nabla^Sz|^2=1-z^2$, so
$\Delta_{S^2}z^2=2z\Delta_{S^2}z+2|\nabla^Sz|^2=2-6z^2$.
It follows that
\begin{align}\label{eq:lap}
 \Delta_{g_\eps}z^2=(1+\eps^2)(2-6z^2).
\end{align}
Define
\begin{align*}
 \Phi=\frac{1-z^2}{8},\qquad \overline\Phi=\frac1{12}.
\end{align*}
For $u_\eps$ in~\eqref{eq:conformal-factor}, equation~\eqref{eq:lap} yields
\begin{align*}
 \frac{\Delta_{g_\eps}u_\eps}{2u_\eps}
 =\frac{\eps^4}{u_\eps}(\Phi-\overline\Phi).
\end{align*}
Since $\Phi\ge0$ and $u_\eps\ge1$, Theorem~\ref{thm:seed} gives
\begin{align*}
 k_{g_\eps}-\frac{\Delta_{g_\eps}u_\eps}{2u_\eps}
 &\ge\eps^4\left(\Phi-\frac{\Phi-\overline\Phi}{u_\eps}-\frac{\eps^2}{2}\right)\\
 &=\eps^4\left(\frac{\overline\Phi}{u_\eps}
       +\left(1-\frac1{u_\eps}\right)\Phi-\frac{\eps^2}{2}\right)\\
 &\ge\eps^4\left(\frac{1}{12u_\eps}-\frac{\eps^2}{2}\right).
\end{align*}
Now $1\le u_\eps<2$ and $\eps^2\le1/16$. Thus
\begin{align*}
 \frac{1}{12u_\eps}-\frac{\eps^2}{2}
 \ge\frac1{24}-\frac1{32}=\frac1{96},\qquad
 u_\eps^{-2}\ge\frac14.
\end{align*}
Lemma~\ref{lem:conformal} now proves~\eqref{eq:final-bound}.
All coefficients in~\eqref{eq:metric} and~\eqref{eq:conformal-factor}
are independent of the torus variable, so the metrics are invariant under
torus translations. Their explicit formulas also give convergence to
$g_S+g_T$ in every $C^k$ norm on the compact product as $\eps\to0$.
\end{proof}

\section*{Acknowledgements}
This paper is partially supported by NSFC (Grant Nos. 12571025, 12131012 and 12301032), NSF of Jiangsu (Grant
No. BK20230803),  Fundamental Research Funds for the Central Universities (Grant No. 4007012402), and Zhishan Scholars Programs of Southeast University (Grant No. 2242024RCB0039).

\section*{Use of generative AI.} 
During the preparation of this work, the authors used ChatGPT  for exploratory computations, possible proof directions, and editorial polishing.  The authors reviewed and verified all output.

\end{document}